\font\tencmmib=cmmib10 \skewchar\tencmmib '60
\def\lessim{\ \lower4pt\hbox{$
\buildrel{\displaystyle <}\over\sim$}\ }
\def\gessim{\ \lower4pt\hbox{$\buildrel{\displaystyle >}
\over\sim$}\ }
\def\si{{\sigma}}
\newcommand{\la}{\langle}
\newcommand{\ra}{\rangle}
\newcommand{\e}{\mathbb{E}}
\newcommand{\p}{\mathbb{P}}
\newcommand{\s}{\boldsymbol{\sigma}}
\begin{document}
\newtheorem{Theorem}{Theorem}
\newtheorem{Proposition}{Proposition}
\newtheorem{Lemma}{Lemma}
\newtheorem{Corollary}{Corollary}
\newtheorem{Definition}{Definition}
\newtheorem{Assumption}{Assumption}
\newtheorem{Example}{Example}

\numberwithin{Theorem}{section} \numberwithin{Lemma}{section}
\numberwithin{Proposition}{section} \numberwithin{equation}{section}
\numberwithin{Definition}{section} \numberwithin{Corollary}{section}
\numberwithin{Example}{section}

\title{Disorder chaos in the spherical mean-field model}

\author{
Wei-Kuo Chen\thanks{Department of Mathematics, University of Chicago. Email: wkchen@math.uchicago.edu} 
\and
Hsi-Wei Hsieh\thanks{Institute of Mathematics, Academia Sinica. Email: c76068@math.sinica.edu.tw}
\and
Chii-Ruey Hwang\thanks{Institute of Mathematics, Academia Sinica. Email: crhwang@sinica.edu.tw}
\and
Yuan-Chung Sheu\thanks{Department of Applied Mathematics, National Chiao Tung University. Email: sheu@math.nctu.edu.tw}
}
\date{}
\maketitle
\begin{abstract}
We consider the problem of disorder chaos in the spherical mean-field model. It is concerned about the behavior of the overlap between two independently sampled spin configurations from two Gibbs measures with the same external parameters. The prediction states that if the disorders in the Hamiltonians are slightly decoupled, then the overlap will be concentrated near a constant value. Following Guerra's replica symmetry breaking scheme, we establish this at the level of the free energy as well as the Gibbs measure irrespective the presence or absence of the external field. 
\end{abstract}

{\it Keywords}: Crisanti-Sommers formula, Disorder chaos, Replica symmetry breaking

{\it Mathematics Subject Classification(2000)}: 60K35, 82B44
%60G99

%{\it Running Title}: Disorder chaos in the spherical SK model

\maketitle
%\newpage

\section{Introduction and main results}
This paper is concerned about the chaos problem in mean-field spin glasses. It arose from the discovery that in some models, a small perturbation to the external parameters will result in a dramatic change to the overall energy landscape and the organization of the pure states of the Gibbs measure. Over the past decades, physicists have intensively studied chaos phenomenon at the free energy level utilizing the replica method, where most related works were discussed in models with Ising spin. We refer readers to the survey of Rizzo \cite{rizzo07} and the references therein along this direction. Recently, mathematical results also have been obtained in the Ising-spin mixed even-spin model. Chaos in disorder without external field was considered in Chatterjee \cite{chatterjee2009disorder} and a more general situation with external field was handled in Chen \cite{chen2013disorder}. Some special cases of temperature chaos were obtained in Chen and Panchenko \cite{chen2014chaos} and Chen \cite{chen2013approach}. 

The aim of this work is to investigate the problem of disorder chaos in the spherical mean-field model. Our approach is based on Guerra's replica symmetry breaking bound for the coupled free energy with overlap constraint. This methodology was adapted in Chen \cite{chen2013disorder} to establish chaos in disorder for Ising-spin mixed even-spin model with external field, where many estimates were highly involved due to the nature of the Ising spin. In this paper, we first want to illustrate how the same method may as well be applied to the spherical model and clarify several ideas behind the proof sketch of Research Problem 15.7.14. about disorder chaos problem in Talagrand \cite{talagrand2011meanII} and Chen \cite{chen2013disorder} with more explicit and simpler computations. Our results cover both situations when the external field is present or absent. On the technical ground, we intend to understand to what extent the current approach can reach. In Panchenko and Talagrand \cite{panchenko2007overlap}, the same approach as the present paper was formerly used to study the conjectures of ultrametricity and chaos in temperature for spherical pure even-spin model, where it has been pointed out that these problems can not be achieved at the level of the free energy. We show that chaos in disorder is indeed a much stronger effect and can still be established at the free energy level even in the mixed even-spin model. 

We now state our main results. For each $N\in\mathbb{N}$, let $X_N$ be a centered Gaussian process indexed by the configuration space$$S_{N}=\Bigl\{\boldsymbol{\sigma}=(\sigma_{1}, \dots, \sigma_{N})\in\mathbb{R}^{N}: \sum_{i\leq N}\sigma_{i}^{2}=N\Bigr\}$$ 
and equipped with the covariance structure
\begin{align*}
\e X_N(\s^1)X_N(\s^2)=N\xi(R_{1,2}),
\end{align*}
where $R_{1, 2}=N^{-1}\boldsymbol{\sigma}^{1}\cdot\boldsymbol{\sigma}^{2}$ is called the overlap between two configurations $\boldsymbol{\sigma}^{1},\boldsymbol{\sigma}^{2}\in S_N$ and $\xi:[0,1]\rightarrow \mathbb{R}$ is an even convex function with $\xi''(x)>0$ for $x>0$ and $\xi'''(x)\geq 0$ for $x\geq 0.$ The spherical model is defined on $S_N$ and its Hamiltonian takes the form,
\begin{align*}
-H_N(\s)=X_N(\s)+h\sum_{i=1}^N\sigma_i.
\end{align*}
Set the corresponding Gibbs measure,
\begin{align*}
dG_N(\s)=\frac{1}{Z_N}\exp\Bigl(-H_N(\s)\Bigr)d\lambda_N(\s),
\end{align*}
where $d\lambda_N$ is the uniform probability measure on $S_N$ and the normalizing factor $Z_N$ is called the partition function. An important example of $\xi$ is the mixed even-spin model, $\xi(x)=\sum_{p\geq 1}\beta_{p}^2x^{2p}$ for some sequence of real numbers $(\beta_{p})_{p\geq 1}$ with $\sum_{p\geq 1}2^p\beta_p^2<\infty.$ Denote by $p_N=N^{-1}\e \log Z_N$ the limiting free energy.  Probably the most important fact about the spherical model is the Crisanti-Sommers formula \cite{crisanti1993sphericalp} for the limiting free energy,
\begin{align}\label{pf}
\lim_{N\rightarrow\infty}p_N=\inf_{x,b}\mathcal{P}(x,b).
\end{align}
Here for any distribution function $x$ on $[0,1]$ and $b>\max\bigl\{1,\int_0^1\xi''(s)x(s)ds\bigr\}$, 
\begin{align}\label{parisifunc}
 \mathcal P(x,b):= \frac{1}{2} \left( \frac{h^2}{b-d(0)} + \int_0^1 \frac{\xi''(q)}{b-d(q)} dq +b-1- \log b -\int_0^1 \xi''(q)x(q)dq \right),
\end{align}
where $d(q):= \int_q^1 \xi''(s) x(s) ds$. The formula \eqref{pf} was firstly verified by Talagrand \cite{talagrand2006free} and later generalized to the spherical mixed $p$-spin model including odd $p$ in Chen \cite{Chen12}. A key fact of the variational formula \eqref{pf} is the existence and uniqueness of the optimizer or the functional order parameter, which is guaranteed by Talagrand \cite[Theorem 1.2]{talagrand2006free}.

In the problem of disorder chaos, we are interested in understanding how the system would behave when the disorder is perturbed. To this end, we shall consider two copies $X_N^1$ and $X_N^2$ of $X_N$ with covariance
\begin{align*}
\mathbb{E}X_{N}^1(\boldsymbol{\sigma}^{1})X_{N}^2(\boldsymbol{\sigma}^{2})&=tN\xi(R_{1,2})
\end{align*}
for some $t\in [0,1].$ In the same manner as $H_N$, $G_N$ and $Z_N$, we denote by $H_N^1,H_N^2$ the Hamiltonians, $G_N^1,G_N^2$ the Gibbs measures and $Z_N^1,Z_N^2$ the partition functions corresponding to $(X_N^1,h)$ and $(X_N^2,h)$, respectively.
Let $\left<\cdot\right>$ denote the Gibbs expectation with respect to the product measure $dG_N^1(\boldsymbol{\sigma}^1)\times dG_N^2(\boldsymbol{\sigma}^2).$ If $t=1,$ these two systems are identically the same in which case the limiting distribution of the overlap $R_{1,2}$ under the measure $\mathbb{E}\left<\cdot\right>$ is typically non-trivial in the replica symmetry breaking region. Contrary to the situation $t=1$, our main results on disorder chaos stated in the following theorems say that the system will change dramatically at the level of the free energy and the Gibbs measure if the two systems are decoupled $0<t<1.$

\begin{Theorem}\label{thm0}
For $u\in [-1,1]$ and $\alpha>0,$ define the coupled partition function,
$$
Z_{N,u,\alpha}=\int_{|R_{1,2}-u|<\alpha}\exp\bigl(-H_N^1(\s^1)-H_N^2(\s^2)\bigr)d\lambda_N(\s^1)d\lambda_N(\s^2).
$$
and set the coupled free energy,
\begin{align}\label{thm0:eq2}
p_{N,u,\alpha}=\frac{1}{N}\e\log Z_{N,u,\alpha}.
\end{align}
 If $0<t<1$, there exists some $u^*\in [0,1)$ such that for all $u\neq u^*$, 
\begin{align} \label{thm0:eq1}
\limsup_{\alpha\downarrow 0}\limsup_{N\rightarrow\infty}p_{N,u,\alpha}<2\inf_{x,b}\mathcal{P}(x,b).
\end{align}
\end{Theorem}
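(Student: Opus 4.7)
The strategy is to apply Guerra's replica symmetry breaking interpolation to the coupled system $(H_N^1,H_N^2)$ restricted to the overlap slice $\{|R_{1,2}-u|<\alpha\}$ and to extract from the resulting bound a strict improvement over $2\inf_{x,b}\mathcal{P}(x,b)$ whenever $u\neq u^*$.

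First I would construct a coupled Guerra--Talagrand bound. Fix admissible pairs $(x_1,b_1)$ and $(x_2,b_2)$, and introduce a Ruelle probability cascade carrying two auxiliary Gaussian processes whose covariance structure mimics that of $(X_N^1,X_N^2)$, with cross-covariance proportional to $t\xi$ evaluated at the cascade overlap. Interpolating linearly between the coupled Hamiltonian $H_N^1(\s^1)+H_N^2(\s^2)$ and the cascade Hamiltonian, Gaussian integration by parts together with the standard spherical reduction of Talagrand should yield
\begin{align*}
\limsup_{\alpha\downarrow 0}\limsup_{N\to\infty}p_{N,u,\alpha}\leq \mathcal{P}(x_1,b_1)+\mathcal{P}(x_2,b_2)-\Lambda_t(x_1,x_2,b_1,b_2;u),
\end{align*}
where $\Lambda_t\geq 0$ is an explicit non-negative correction encoding how costly it is to enforce $R_{1,2}\approx u$ when the disorders are only $t$-correlated. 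Its structure should feature the Bregman-type kernel $\xi(q)-\xi(u)-\xi'(u)(q-u)$, weighted by measures built from $x_1,x_2$.

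Next, let $(x^*,b^*)$ denote the unique minimizer of $\mathcal{P}$, whose existence and uniqueness are guaranteed by \cite[Theorem 1.2]{talagrand2006free}. Specializing $(x_1,b_1)=(x_2,b_2)=(x^*,b^*)$, the previous display becomes
\begin{align*}
\limsup_{\alpha\downarrow 0}\limsup_{N\to\infty}p_{N,u,\alpha}\leq 2\inf_{x,b}\mathcal{P}(x,b)-\Lambda_t(x^*,x^*,b^*,b^*;u).
\end{align*}
Define $u^*\in[0,1)$ to be the value at which the one-variable function $u\mapsto\Lambda_t(x^*,x^*,b^*,b^*;u)$ attains its minimum and vanishes, identified through the first-order stationarity condition inherent in the Guerra interpolation. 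The theorem then reduces to showing the strict positivity $\Lambda_t(x^*,x^*,b^*,b^*;u)>0$ for every $u\neq u^*$.

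The main obstacle is precisely this strict positivity, which encapsulates the heart of the disorder chaos phenomenon: that $t<1$ genuinely decouples the two replicas. Heuristically, if $\Lambda_t$ vanished at two distinct values $u_1\neq u_2$, the associated first-order conditions, combined with the uniqueness of the Parisi minimizer, could be used to force $t=1$, a contradiction. Making this rigorous will require a careful convexity analysis of $\Lambda_t$ in $u$, exploiting $\xi''>0$ on $(0,1]$ and the extremality properties of $(x^*,b^*)$. The spherical setting should be more transparent than the Ising analogue treated in \cite{chen2013disorder} because the Crisanti--Sommers functional is expressed entirely in terms of the single integrated quantity $d(q)$, which simplifies the relevant strict convexity estimates and reduces the implementation of the overlap constraint to an elementary Laplace-type computation.
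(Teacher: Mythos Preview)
Your high-level strategy is in the right spirit, but there is a genuine gap in how you expect the Guerra bound to produce a strict improvement. The paper's bound (Proposition~\ref{prop04}) carries an extra Lagrange multiplier $\lambda\in\mathbb{R}$, introduced via a term $\lambda\,\s^1\cdot\s^2$ in the Gaussian reference measure, yielding an upper bound $\mathcal{P}_u(x,b,\lambda)$. The crucial point is that at the optimizer $(x,b)$ and $\lambda=0$ one has $\mathcal{P}_u(x,b,0)=2\mathcal{P}(x,b)$ for \emph{every} $u$ with $|u|\le u_{x}$ (the left edge of the support of $x$). Thus the ``non-negative correction $\Lambda_t$'' you posit does not appear at the natural parameter choice: it vanishes identically on the whole interval $[-u_{x},u_{x}]$, not merely at $u^*$. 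The strict inequality for $|u|\le u_{x}$, $u\neq u^*$, is obtained only by perturbing $\lambda$ away from zero: the paper computes $\partial_\lambda\mathcal{P}_u(x,b,0)=f(u)$ with $f(u)=(h^2+t\xi'(u))/(b-d(0))^2-u$, characterizes $u^*$ as the unique zero of $f$ on $[-u_{x},u_{x}]$ via an elementary convexity argument (Proposition~\ref{prop01}, using \eqref{eq8}), and then a first-order Taylor step in $\lambda$ produces the gap. Your proposal omits the Lagrange multiplier entirely, and the heuristic ``if $\Lambda_t$ vanished at two points then $t=1$'' is not how the argument runs and would not salvage the inner regime.

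For $|u|>u_{x}$ the mechanism is different and closer to your picture: here the bound at $\lambda=0$ is already strictly below $2\mathcal{P}(x,b)$, because $d(s)>\phi_{|u|}(s)$ on $[0,|u|)$ forces a strict inequality in the integral comparison. So the proof naturally splits into two regimes, and the Lagrange multiplier is indispensable precisely in the inner one $|u|\le u_{x}$. Without it your scheme cannot close, and the Bregman-type error term you describe does not by itself distinguish $u^*$ from the rest of $[-u_{x},u_{x}]$.
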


In other words, there is free energy cost if $u\neq u^*$ for $0<t<1$. Here the determination of $u^*$ is a technical issue, which is described through an equation related to the Crisanti-Sommers formula as well as the associated optimizer. We shall leave the details to Section $3$. Roughly speaking, $u^*$ is equal to zero if $h=0$ and it stays positive if $h\neq 0$. As an immediate application of the Gaussian concentration of measure, Theorem \ref{thm0} yields the concentration of the overlap near the constant $u^*$ in the following theorem.
 
\begin{Theorem}\label{thm01}
If $0<t<1$, then there exists some $u^*\in [0,1)$ such that for any $\varepsilon>0,$ 
\begin{align} \label{expoinq}
\mathbb{E} \langle \mathbf{1}_{  \{ |R_{1,2}-u^* |> \varepsilon \} } \rangle \leq K \exp \left(   -\frac{N}{K}\right)
\end{align}
for all $N\geq 1,$ where $K$ is a constant independent of $N$.
\end{Theorem}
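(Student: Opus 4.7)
The plan is to reduce Theorem \ref{thm01} to Theorem \ref{thm0} by combining a finite covering argument in $u$-space with Gaussian concentration of the log partition functions. First, I would express
$$\la \mathbf{1}_{\{|R_{1,2}-u^*|>\eps\}}\ra = \frac{Z_{N,\eps}}{Z_N^1 Z_N^2},$$
where $Z_{N,\eps}$ denotes the integral of $\exp(-H_N^1(\s^1)-H_N^2(\s^2))$ over $\{|R_{1,2}-u^*|>\eps\}$. Since $\{u\in[-1,1]:|u-u^*|\geq\eps\}$ is compact, I can cover it by finitely many open intervals $(u_j-\alpha,u_j+\alpha)$, $j=1,\dots,M$, with each $u_j\neq u^*$ and $\alpha$ as small as needed, which gives $Z_{N,\eps}\leq \sum_{j=1}^M Z_{N,u_j,\alpha}$.

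Next, I would apply Theorem \ref{thm0} to each of the finitely many $u_j$: for $\alpha$ chosen small enough, there is a common gap $\delta>0$ with $\limsup_N p_{N,u_j,\alpha}\leq 2\inf_{x,b}\mathcal{P}(x,b)-3\delta$ for every $j$. Combined with the Crisanti--Sommers formula \eqref{pf} applied to $\e\log Z_N^1$ and $\e\log Z_N^2$, this yields, for all $N$ large enough and every $j$,
$$\frac{1}{N}\e\log Z_{N,u_j,\alpha}-\frac{1}{N}\e\log(Z_N^1 Z_N^2)\leq -2\delta.$$

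The final ingredient is Gaussian concentration. Expressing the disorder $(X_N^1,X_N^2)$ as a linear combination of independent standard Gaussians, each of $\log Z_{N,u_j,\alpha}$, $\log Z_N^1$, and $\log Z_N^2$ is a Lipschitz function of the disorder with Lipschitz constant $O(\sqrt{N})$, because on $S_N\times S_N$ the variances $\e[X_N^k(\s^k)^2]=N\xi(1)$ are uniformly bounded. The Borell--Tsirelson--Ibragimov--Sudakov inequality then yields, for each such $Z$,
$$\p\Bigl(\Bigl|\tfrac{1}{N}\log Z-\tfrac{1}{N}\e\log Z\Bigr|\geq \tfrac{\delta}{3}\Bigr)\leq 2\exp(-cN\delta^2).$$
A union bound over the $M+2$ events defines a ``good'' event $\Omega$ with $\p(\Omega^c)\leq K_1 e^{-N/K_1}$, on which $\la \mathbf{1}_{\{|R_{1,2}-u^*|>\eps\}}\ra \leq M\exp(-N\delta)$, so that
$$\e\la \mathbf{1}_{\{|R_{1,2}-u^*|>\eps\}}\ra \leq M e^{-N\delta}+\p(\Omega^c)$$
gives \eqref{expoinq}. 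The main technical point to verify is the Gaussian-Lipschitz bound for the constrained partition function $Z_{N,u_j,\alpha}$, which is standard because restricting the integration to a disorder-independent set preserves the Lipschitz constant in the disorder; uniformity of $\delta$ across the finite cover is automatic from finiteness of $M$.
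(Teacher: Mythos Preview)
Your approach is essentially identical to the paper's: a finite compactness cover of $\{|u-u^*|\geq\eps\}$, the free-energy gap from Theorem~\ref{thm0} on each piece, and Gaussian concentration of the (constrained and unconstrained) log--partition functions to pass to a high-probability event. The only notable difference is bookkeeping: the paper extracts, for each $u$ in the compact set, its own $\alpha_u$ from Theorem~\ref{thm0} and then takes a finite subcover with possibly different radii $\alpha_{u_i}$, whereas you insist on a single common $\alpha$---which creates a mild circularity (the centers $u_j$ of your cover depend on $\alpha$, while the ``$\alpha$ small enough'' needed in Theorem~\ref{thm0} depends on the $u_j$); adopting the paper's per-point $\alpha_u$ removes this wrinkle, and for concentration the paper simply invokes Lemma~\ref{lem03} in place of your Borell--TIS formulation.
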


This paper is organized as follows. Our approach is based on a two-dimensional extension of the Guerra replica symmetry breaking bound for \eqref{thm0:eq2} and a sketch of the proof for disorder chaos in the Ising-spin mixed even-spin model as was outlined in Talagrand \cite[Section 15.7]{talagrand2011meanII} and later implemented in Chen \cite{chen2013disorder}. In Section 2, using Guerra's bound, we will compute explicitly manageable upper bounds for the coupled free energy \eqref{thm0:eq2}. These results will be used in Section 3. We first describe how to determine the constant $u^*$ and then conclude Theorem \ref{thm0}. Finally, we carry out the proof of Theorem \ref{thm01}.

%%%%%%%%%%%%%%%%%%%%%%%%%%%%%%%%%%%%%%%%%%%%%%%%%%%%%%%%%%%

\section{Guerra's interpolation}

The main goal of this section is to derive the following upper bound for the coupled free energy \eqref{thm0:eq2}, which is an extended version of Proposition 7.8 in \cite{talagrand2006free}.  

\begin{Proposition}\label{prop04}
For any distribution function $x$ on $[0,1]$, $\lambda \in \mathbb{R}$ and $b>\int_0^1\xi''(s)x(s)ds+|\lambda |$, we have that for any $u\in [-1,1],$
\begin{align} \label{upositive}
\limsup_{\alpha \downarrow 0} \limsup_{N \rightarrow \infty} p_{N,u,\alpha} &\leq  \mathcal{P}_u(x,b,\lambda),
\end{align}
where the functional $\mathcal{P}_u(x,b,\lambda)$ is defined as follows. Set $d(q)=\int_q^1 \xi''(s)x(s)ds$ and $$\phi_u(q)=d(u)+\frac{1-t}{1+t}(d(q)-d(u)).$$ Define
\begin{align*}
\mathcal{P}_u(x,b,\lambda) &:=\left\{
\begin{array}{ll}
T_u(x,b,\lambda)+\frac{h^2}{b-\lambda- d(0)},&\mbox{if $u\in[0,1]$},\\
T_u(x,b,\lambda)+\frac{h^2}{b-\lambda-\phi_{|u|}(0)},&\mbox{if $u\in[-1,0])$},
\end{array}\right.
\end{align*}
where 
\begin{align}
\begin{split}\label{eq12}
T_u(x,b,\lambda)&=\log \sqrt{\frac{b^2}{b^2-\lambda^2}}+\frac{1+t}{2}  \int_0^{|u|} \frac{\xi''(s)}{b-\eta\lambda-d(s)} ds+ \frac{1-t}{2}  \int_0^{|u|} \frac{\xi''(s)}{b+\eta\lambda-\phi_{|u|}(s)} ds\\
&\quad +\frac{1}{2} \int_{|u|}^1 \frac{\xi''(s)}{b-\lambda-d(s)} ds + \frac{1}{2} \int_{|u|}^1 \frac{\xi''(s)}{b+\lambda-d(s)} ds \\
 &\quad-\lambda u +b-1- \log b - \int_0^1 \xi''(q) x(q)dq.
 \end{split}
\end{align}

\end{Proposition}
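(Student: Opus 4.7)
The plan is to combine the one-dimensional Guerra replica symmetry breaking scheme for the spherical model (Talagrand's Proposition 7.8) with a two-replica extension that accommodates both the coupling parameter $t$ between the two disorders and the Lagrange multiplier $\lambda$ that enforces the overlap constraint $R_{1,2}\approx u$.

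First, I would remove the hard constraint on the overlap via the Lagrange trick. Since $\mathbf{1}_{\{|R_{1,2}-u|<\alpha\}}\le \exp(N\lambda(R_{1,2}-u)+N|\lambda|\alpha)$ for any $\lambda\in\mathbb{R}$, one obtains
\begin{align*}
p_{N,u,\alpha}\le |\lambda|\alpha-\lambda u+\frac{1}{N}\mathbb{E}\log\int\exp\Bigl(\lambda\,\s^1\cdot\s^2-H_N^1(\s^1)-H_N^2(\s^2)\Bigr)\,d\lambda_N(\s^1)\,d\lambda_N(\s^2).
\end{align*}
The $|\lambda|\alpha$ term disappears in the limit $\alpha\downarrow 0$, so the problem reduces to bounding the augmented two-replica free energy on the right.

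Next, I would set up Guerra's interpolation for this augmented system. Discretize the order parameter $x$ as a step function with break-points $0=q_0<q_1<\cdots<q_k=1$ chosen so that $|u|$ is one of the $q_j$, and build a Ruelle probability cascade with weights dictated by these levels. For each node $\alpha$ of the tree at depth $p$ and each coordinate $i\le N$, introduce independent Gaussian vectors $z_\alpha^{1,i},z_\alpha^{2,i}$ whose covariance structure is engineered so that within the same system the $z_\alpha^{\ell,i}$ reproduce the standard Parisi cascade for $\xi''$, while across systems the covariance matches $t\xi''$ above the forced level $|u|$ and the derivative of $\phi_{|u|}$ below it. The particular form $\phi_u(q)=d(u)+\frac{1-t}{1+t}(d(q)-d(u))$ is explained by the diagonalization $z^1=(z^++z^-)/\sqrt{2}$, $z^2=(z^+-z^-)/\sqrt{2}$: the symmetric part $z^+$ has covariance proportional to $1+t$ and the antisymmetric part $z^-$ proportional to $1-t$, and only one of them couples to the Lagrange term $\lambda\,\s^1\cdot\s^2$ (whose sign depends on the sign of $u$, explaining the split into the two cases of $\mathcal{P}_u$ and the $\pm\eta\lambda$ in \eqref{eq12}).

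Now consider the interpolating Hamiltonian
\begin{align*}
-H_N(s,\s^1,\s^2)=\sqrt{s}\bigl(X_N^1(\s^1)+X_N^2(\s^2)\bigr)+\sqrt{1-s}\,Y_N(\s^1,\s^2)+h\sum_{i\le N}(\sigma_i^1+\sigma_i^2)+\lambda\,\s^1\cdot\s^2,
\end{align*}
where $Y_N$ is built from the cascade Gaussians $z_\alpha^{\ell,i}$ together with a driving field chosen so that at $s=0$ the partition function factorizes over the coordinates $i$. Setting $\varphi(s)=N^{-1}\mathbb{E}\log Z_N(s)$, two checks remain. At $s=0$, one evaluates the factorized integral by converting $d\lambda_N$ to a Gaussian density on $\mathbb{R}^N$ with Lagrange parameter $b$; this is the step that yields the terms $b-1-\log b$ and $\log\sqrt{b^2/(b^2-\lambda^2)}$. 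Integrating out the cascade Gaussians level by level produces the four overlap integrals in \eqref{eq12}, while the external-field contribution gives either $h^2/(b-\lambda-d(0))$ or $h^2/(b-\lambda-\phi_{|u|}(0))$ depending on whether the uniform direction sits on the symmetric or antisymmetric side of the diagonalization relative to the constraint, i.e., on the sign of $u$. For $s>0$, Gaussian integration by parts puts $\varphi'(s)$ in the standard Guerra form, a sum of expectations of terms of the type $\xi(R)-R\xi'(q_p)+\cdots$ for the various cross- and same-system overlaps; the convexity hypotheses on $\xi$ guarantee each such term has the correct sign, so $\varphi'(s)\le 0$ and hence $\varphi(1)\le \varphi(0)$. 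Combining these with the Lagrange step and optimizing over the discretization of $x$ yields \eqref{upositive}.

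The hard part is the bookkeeping below the level $|u|$: the constraint $R_{1,2}\approx u$ modifies the effective Parisi structure only for overlaps at or below $|u|$, and one must verify that the modified kernel $\phi_{|u|}$ and the $\pm$-mode diagonalization interact correctly with the Ruelle cascade weights at each depth, both when computing the boundary value $\varphi(0)$ and when extracting the sign of $\varphi'(s)$. Once this bookkeeping is pinned down, the rest of the argument follows the lines of Talagrand's Proposition 7.8 and the Ising-spin scheme of \cite{chen2013disorder}, adapted to the spherical setting.
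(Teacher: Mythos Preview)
Your proposal captures the main structural ingredients (the $\pm$-mode diagonalization that produces $\phi_{|u|}$, the sign-of-$u$ bifurcation, the Gaussian replacement of the spherical measure), but there is a genuine gap in the order of operations. The paper applies the Lagrange trick \emph{after} the Guerra interpolation, not before: one interpolates with the hard constraint $|R_{1,2}-u|<\alpha$ kept inside $F_{k+1}(a)$ (Proposition~\ref{RSB}), and only at the decoupled endpoint $a=0$ does one invoke $\mathbf{1}_{\{|R_{1,2}-u|<\alpha\}}\le e^{N\lambda(R_{1,2}-u)+N|\lambda|\alpha}$ to pass to the Gaussian integral (Lemma~\ref{lem1}). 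The reason is that the interpolation derivative contains a \emph{diagonal} self-overlap contribution coming from the coupled covariance evaluated on a single pair $(\s^1,\s^2)$, namely
\[
t\,\xi(R_{1,2})-\eta t\,R_{1,2}\,\xi'(|u|).
\]
Under the constraint this is $-t\theta(|u|)+O(\alpha)$, a constant that feeds into the $\theta$-corrections of \eqref{RSB:eq1} and is absorbed in $\mathcal{R}$. If you drop the constraint first, $R_{1,2}$ ranges over all of $[-1,1]$, and convexity of $\xi$ gives only a \emph{lower} bound for this expression, whereas an upper bound is what you need to push $\varphi'(s)$ below the right constant. So your assertion that ``the convexity hypotheses on $\xi$ guarantee each such term has the correct sign'' is valid for the off-diagonal replica overlaps but fails for this diagonal piece; with your ordering you would not recover $\mathcal{P}_u(x,b,\lambda)$ but something strictly larger.

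A second omission is the rescaling of the cascade weights below the level $|u|$: the paper sets $n_p=m_p/(1+t)$ for $p<\tau$ and $n_p=m_p$ for $p\ge\tau$, see \eqref{eq1}. This is not cosmetic---it is precisely what makes the $(1+t)$ prefactor in the first sum of \eqref{RSB:eq1} collapse to the single-system quantity $\int_0^1\xi''(q)x(q)\,dq$ in \eqref{eq7}, and without it the $\theta$-corrections do not match the last term of $T_u$. Once you keep the constraint through the interpolation and adopt this rescaling, the rest of your sketch (Gaussian replacement with parameter $b$, rotation to $\rho^1,\rho^2$, level-by-level evaluation) coincides with Lemmas~\ref{lem1}--\ref{prop05} and the explicit computation in the paper.
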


We mainly follow the procedure of the proof of Theorem 5.3 in \cite{talagrand2006free} to prove Proposition \ref{prop04}. Fix $u \in [-1,1]$ and $\eta \in \{ 1,-1\}$ with $u = \eta |u|$. It suffices to prove \eqref{upositive} only for discrete $x.$ For $k\geq 0$, consider two sequences of real numbers $\mathbf{m}=(m_\ell)_{0\leq \ell\leq k}$ and $\mathbf{q}=(q_{\ell})_{0\leq \ell\leq k+1}$ that satisfy
\begin{align}
\begin{split}\label{eq_kmq}
&0=m_0\leq m_1\leq \cdots\leq m_k\leq m_{k}=1,\\
&0=q_0\leq q_1\leq \cdots\leq q_{k+1}\leq q_{k+1}=1.
\end{split}
\end{align}
Let $x$ be a distribution function on $[0,1]$ associated to this triplet $(k,\mathbf{m},\mathbf{q})$, that is, $x(q)=m_\ell$ for $q\in[q_\ell,q_{\ell+1})$ and $0\leq \ell\leq k$ and $x(1)=1.$ Without loss of generality, we may assume that $q_\tau=|u|$ for some $0\leq \tau\leq k+1$. Define the sequence $\boldsymbol{n}=(n_p)_{0\leq p\leq k}$ by
\begin{align}
\begin{split}\label{eq1}
&0=n_0,n_1=\frac{m_1}{1+t},\ldots,n_{\tau-1}=\frac{m_{\tau-1}}{1+t},n_{\tau}=m_{\tau},\ldots,n_{k}=m_k.
\end{split}
\end{align}
%where $m$ is a number satisfying $\frac{m_{\tau-1}}{1+t}\leq m \leq m_\tau$. % 
We consider further independent pairs of centered Gaussian random vectors $(y_p^1,y_p^2)_{0\leq p \leq k}$ that possess covariance
\begin{align}
\begin{split}
\label{eq3}
&\mathbb{E}(y_p^j)^2 = \xi'(q_{p+1})-\xi'(q_p), \ \ 0\leq p \leq k, \ j=1,2, \\
&\mathbb{E}y_p^1 y_p^2 = \eta t\xi'((q_{p+1})-\xi'(q_p)), \ \ 0\leq p <\tau, \\
&y_p^1, y_p^2 \ \textrm{are independent}, \ \  \tau\leq p \leq k.
\end{split}
\end{align}
Let $(y_{i,p}^1,y_{i,p}^2)_{0 \leq p \leq k}$ be independent copies of $(y_p^1,y_p^2)_{0 \leq p \leq k}$ for $1 \leq i\leq N$ and be independent of $X_N^1,X_N^2$. Following Guerra's scheme, we define the interpolated Hamiltonian $H_{N,a}(\s^1,\s^2)$ for $a\in [0,1]$,
$$
-H_{N,a}(\s^1,\s^2)=\sqrt{a}(X_N^1(\s^1)+X_N^2(\s^2))+\sum_{j=1}^2\sum_{i=1}^N\Bigl(\sqrt{1-a}\sum_{0\leq p\leq k}y_{i,p}^j+h\Bigr)\sigma_i^j.
$$
Define
\begin{align*}
F_{k+1}(a) = \log \int_{|R_{1,2}-u|< \alpha} \exp \bigl(-H_{N,t}(\s^1,\s^2)\bigr)  d \lambda_N (\boldsymbol{\sigma}^1) d \lambda_N (\boldsymbol{\sigma}^2).
\end{align*}
Denote by $\mathbb{E}_{p}$ the expectation in the random variables $(y_{i,p}^1,y_{i,p}^2),\ldots,(y_{i,k}^1,y_{i,k}^2)$ and define recursively for $0\leq p \leq k$,
\begin{align*}
F_{p}(a) = \left\{
\begin{array}{ll}
   \frac{1}{n_{p}} \log \mathbb{E}_{p} \exp n_{p} F_{p+1}(a),         & \textrm{if} \ n_{p} \neq 0, \\
   \\
    \mathbb{E}_{p} F_{p+1}(a),         & \textrm{if} \ n_{p}=0.
\end{array} \right.
\end{align*}
Finally set $\phi(a)=N^{-1}\e F_0(a)$ and denote $F_0=\phi(0).$ Following essentially the same proof as either Theorem 5 in \cite{panchenko2007overlap} or Theorem 7.1 in \cite{talagrand2006free}, one can prove that the interpolated free energy $\phi$ yields

\begin{Proposition}\label{RSB}
For any $\alpha>0$ and $x$ corresponding to $(k,\mathbf{m},\mathbf{q})$, we have that
\begin{align}\label{RSB:eq1}
p_{N,u,\alpha}&\leq F_0-(1+t)\sum_{0\leq p \leq \tau} n_p(\theta(\rho_{p+1})-\theta(\rho_p))- \sum_{\tau <p \leq k} n_p(\theta(\rho_{p+1})-\theta(\rho_p) ) + \mathcal{R}
\end{align}
where $\theta(q)=q\xi'(q)-\xi(q)$ and $\limsup_{N\rightarrow\infty}|\mathcal{R}|=0.$
\end{Proposition}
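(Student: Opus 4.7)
The plan is to apply Guerra's interpolation scheme in the standard two-replica form, following the templates of Theorem 5 in \cite{panchenko2007overlap} and Theorem 7.1 in \cite{talagrand2006free} that the authors cite. Since $\phi(1)=p_{N,u,\alpha}$ and $\phi(0)=F_0$, the proposition reduces to bounding $\int_0^1 \phi'(a)\,da$ from above by $o(1)$ and then unwinding $F_0$ via Abel summation.

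First I would differentiate $\phi(a)$ via Gaussian integration by parts on both $X_N^j$ and the level fields $y_{i,p}^j$. The recursive cascade definition of $F_p$ with exponents $n_p$ generates a Ruelle-probability-cascade Gibbs measure $\langle\cdot\rangle_a$ on many replicas of $(\s^1,\s^2)$, and the standard manipulations turn $\phi'(a)$ into a single average of a kernel in the overlaps $R_{1,1}, R_{2,2}, R_{1,2}$ of the cascade replicas. The spherical constraint forces the self-overlaps to $1$ (via a Laplace reduction of the uniform measure on $S_N$ that contributes $o(1)$), and the overlap restriction $|R_{1,2}-u|<\alpha$ pins $R_{1,2}=\eta|u|$ up to $O(\alpha)$. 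The covariances in \eqref{eq3} are designed precisely so that the coupling $\eta t$ for $p<\tau$ matches the $t\xi(R_{1,2})$ term arising from Gaussian IBP on $\mathbb{E}X_N^1(\s^1)X_N^2(\s^2)$ evaluated at $R_{1,2}=u$, while for $p\geq\tau$ the independence splits the derivative into two decoupled single-replica computations. Both regimes yield a pointwise nonpositive kernel after these matchings, so $\phi'(a)\leq o(1)$ uniformly in $a$.

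Next I would unwind $F_0=\phi(0)$ explicitly. Since $F_{k+1}(0)$ is a quadratic form in $\s^j$ coming from the Gaussian fields $y_{i,p}^j$, each recursive step $F_{p+1}\mapsto F_p$ integrates out one level of Gaussians via the cascade formula $n_p^{-1}\log\mathbb{E}_p\exp(n_p F_{p+1})$. With the self-overlaps pinned at $1$ and the cross-overlap pinned at $u$, Abel summation using $\theta(q)=q\xi'(q)-\xi(q)$ produces the two telescoped sums on the RHS of \eqref{RSB:eq1}; the factor $(1+t)$ on the first sum is inherited directly from the rescaling $n_p=m_p/(1+t)$ for $p<\tau$ in \eqref{eq1} after clearing denominators, and the split at $p=\tau$ mirrors the covariance split in \eqref{eq3}.

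The main obstacle is controlling the errors caused by the overlap window $|R_{1,2}-u|<\alpha$: the indicator is not smooth, so Gaussian integration by parts produces residual terms that must be shown to vanish uniformly in $N$ once $\alpha$ is taken small. In addition, the matching between the coupled and decoupled regimes at level $p=\tau$ relies on the normalizing choice $q_\tau=|u|$, so one must verify continuity of the kernel at that level. Both issues are absorbed into $\mathcal{R}$ and handled via Gaussian concentration of the free energy for the coupled system to give $\limsup_N|\mathcal{R}|=0$.
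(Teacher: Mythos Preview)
Your plan misidentifies where the $\theta$ sums in \eqref{RSB:eq1} come from, and this is a genuine gap rather than a presentational issue.

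In the paper's interpolation, $F_0$ is \emph{defined} to be $\phi(0)$; there is nothing to ``unwind'' at that stage, and the later Lemmas~\ref{lem1} and~\ref{prop05} that do compute $F_0$ explicitly produce no $\theta$ terms whatsoever. The $\theta$ sums arise entirely from $\int_0^1\phi'(a)\,da$. When you carry out Gaussian integration by parts on $\phi'(a)$ you do not obtain a pointwise nonpositive kernel; you obtain a decomposition of the schematic form
\[
\phi'(a)\;=\;C\;-\;E(a)\;+\;\mathcal{R}(a),
\]
where $C$ is a constant in $a$ equal precisely to the two $\theta$ sums (with the $(1+t)$ prefactor on the $p<\tau$ block), $E(a)\geq 0$ is the convexity remainder built from expressions of the type $\xi(R)-\xi(q_p)-\xi'(q_p)(R-q_p)\geq 0$, and $\mathcal{R}(a)$ is the error from replacing $R_{1,2}$ by $u$ on the constraint set. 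Dropping $E(a)$ and integrating gives $p_{N,u,\alpha}=\phi(1)\leq F_0+C+\mathcal{R}$, which is exactly \eqref{RSB:eq1}. Your claim ``$\phi'(a)\leq o(1)$'' throws $C$ away and would only yield $p_{N,u,\alpha}\leq F_0+o(1)$, which is not the proposition.

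Concretely, the self-overlap terms from $\mathbb{E}X_N^j(\s^j)^2=N\xi(1)$ and the cross term $\mathbb{E}X_N^1(\s^1)X_N^2(\s^2)=tN\xi(R_{1,2})\approx tN\xi(|u|)$, together with the matching level variances in \eqref{eq3}, combine in $\phi'(a)$ to give constant contributions $2\theta(q_{p+1})-2\theta(q_p)$ weighted by $n_p$ for $p\geq\tau$ and $(1+t)(\theta(q_{p+1})-\theta(q_p))$ weighted by $n_p$ for $p<\tau$; the factor $(1+t)$ appears because for $p<\tau$ both the diagonal covariance and the $t$-coupled off-diagonal covariance contribute at the same cascade level. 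This is the computation in Theorem~5 of \cite{panchenko2007overlap} and Theorem~7.1 of \cite{talagrand2006free}. Your attribution of $(1+t)$ to the rescaling $n_p=m_p/(1+t)$ is backwards: that rescaling is chosen precisely so that after \eqref{RSB:eq1} is established, equation \eqref{eq7} collapses the two sums back to the single-system quantity $\int_0^1\xi''(q)x(q)\,dq$.

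A secondary point: the error $\mathcal{R}$ is not caused by any lack of smoothness of the indicator (Gaussian integration by parts never differentiates in $\s$), and it is not handled by Gaussian concentration. It comes from the discrepancy $|R_{1,2}-u|<\alpha$ inside the convexity remainder, and $\limsup_N|\mathcal{R}|=0$ holds for each fixed $\alpha>0$ directly from the uniform continuity of $\xi$ on $[-1,1]$.
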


Substituting \eqref{eq1} into the right-hand side of \eqref{RSB:eq1}, a direct computation gives
\begin{align}\label{eq7}
&(1+t)\sum_{0\leq p < \tau} n_p(\theta(q_{p+1})-\theta(q_p))+\sum_{\tau \leq p \leq k+1} n_p(\theta(q_{p+1})-\theta(q_p) )\notag\\
&=\sum_{0\leq p\leq k}m_p(\theta(q_{p+1})-\theta(q_p))\notag\\
&=\int_0^1 \xi''(q)x(q)dq.
\end{align}
We now turn to the control of the quantity $F_0.$ For $b>1$, we denote by $\nu_N^b$ the probability measure of $N$ i.i.d. Gaussian random variables with mean zero and variance $b^{-1}$, that is,
\begin{align*}
d\nu_N^b(\boldsymbol{y})=\left( \frac{b}{2 \pi} \right)^{\frac{N}{2}} \exp \left( -\frac{b}{2} \| \boldsymbol{y} \|^2 \right)d\boldsymbol{y}.
\end{align*}
Let $\tau_N^b = -N^{-1} \log \nu_N^b (\{\boldsymbol{\sigma}: \| \boldsymbol{\sigma} \|^2 \geq N  \})$. Without ambiguity, we simply write $\nu^b$ for $\nu_1^b$. Given a number $\lambda$, we define the function
\begin{align*}
B_{k+1}(\boldsymbol{x}^1,\boldsymbol{x}^2,\lambda) = \log \int \exp(\boldsymbol{x}^1 \cdot \boldsymbol{\sigma}^1 + \boldsymbol{x}^2 \cdot \boldsymbol{\sigma}^2 + \lambda \boldsymbol{\sigma}^1 \cdot \boldsymbol{\sigma}^2) d \nu_N^b (\boldsymbol{\sigma}^1) d \nu_N^b (\boldsymbol{\sigma}^2)
\end{align*}
for $\boldsymbol{x}^1,\boldsymbol{x}^2\in\mathbb{R}^N$ and recursively, for $1 \leq p\leq k$,
\begin{align*}
B_{p}(\boldsymbol{x}^1,\boldsymbol{x}^2,\lambda) = \left\{
\begin{array}{ll}
 \frac{1}{n_{p}} \log \mathbb{E} \exp n_{p}B_{p+1} (\boldsymbol{x}^1+\boldsymbol{y}_{p}^1,\boldsymbol{x}^2+\boldsymbol{y}_{p}^2, \lambda), &
\textrm{if} \ n_{p} \neq 0, \\
\\
\mathbb{E} B_{p+1} (\boldsymbol{x}^1+\boldsymbol{y}_{p}^1,\boldsymbol{x}^2+\boldsymbol{y}_{p}^2, \lambda), & \textrm{if} \ n_{p}=0,
\end{array} \right.
\end{align*}
where $\boldsymbol{y}_p^j = (y_{i,p}^j)_{1 \leq i \leq N}$ for $j=1,2$ and $0\leq p \leq k$. Let $\boldsymbol{h}=(h,\ldots,h).$ Following the same argument as in the proof of Lemma 7.1 \cite{talagrand2006free}, we obtain 

\begin{Lemma}\label{lem1} Let $u\in[-1,1]$, $\alpha>0$ and $\lambda\in\mathbb{R}.$ If $b>\int_0^1\xi''(s)x(s)ds+|\lambda|,$ then
\begin{align}\label{lem1:eq1}
F_0&\leq -\lambda u+|\lambda|\alpha+2\tau_N^b+\frac{1}{N}\e B_1(\boldsymbol{h}+\boldsymbol{y}_0^1,\boldsymbol{h}+\boldsymbol{y}_0^2,\lambda).
\end{align}
\end{Lemma}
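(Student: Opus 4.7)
The plan is to prove the bound in three conceptually distinct steps: specialize $F_{k+1}$ at $a=0$ and absorb the overlap constraint via a Lagrange multiplier, compare the constrained sphere integral to a free Gaussian integral paying the exponential cost $e^{2N\tau_N^b}$, and propagate the resulting deterministic inequality through the Guerra-type recursive cascade that defines $F_0$.

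For the first two steps, at $a=0$ the disorders $X_N^j$ drop out, so writing $\boldsymbol{z}^j = \boldsymbol{h} + \sum_{p=0}^k \boldsymbol{y}_p^j$ we have $F_{k+1}(0) = \log\int_{|R_{1,2}-u|<\alpha}\exp(\boldsymbol{z}^1\cdot\s^1+\boldsymbol{z}^2\cdot\s^2)d\lambda_N d\lambda_N$. On the overlap slab, $\lambda\s^1\cdot\s^2 \geq N\lambda u - N|\lambda|\alpha$, so inserting the factor $\exp(-N\lambda u + N|\lambda|\alpha + \lambda\s^1\cdot\s^2) \geq 1$ into the integrand and dropping the constraint by positivity gives
\[
F_{k+1}(0) \leq -N\lambda u + N|\lambda|\alpha + \log\int G\,d\lambda_N d\lambda_N, \qquad G := \exp(\boldsymbol{z}^1\cdot\s^1+\boldsymbol{z}^2\cdot\s^2+\lambda\s^1\cdot\s^2).
\]
The main analytic point is then to replace $d\lambda_N d\lambda_N$ by $d\nu_N^b d\nu_N^b$. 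I would prove $\int G\,d\lambda_N d\lambda_N \leq e^{2N\tau_N^b}\int G\,d\nu_N^b d\nu_N^b$ by polar decomposition: under $\nu_N^b$, the radius $R=\|\s\|$ is independent of the direction $\omega=\s/R$ (uniform on $S^{N-1}$), so $\int G\,d\nu_N^b d\nu_N^b = \e\,\mathcal{R}(R_1,R_2)$, where $\mathcal{R}(r_1,r_2) := \int G(r_1\omega^1,r_2\omega^2)d\omega^1 d\omega^2$ and $\int G\,d\lambda_N d\lambda_N = \mathcal{R}(\sqrt{N},\sqrt{N})$. Integrating out $\omega^1$ first by rotational invariance yields $\mathcal{R}(r_1,r_2) = \int \exp(r_2\boldsymbol{z}^2\cdot\omega^2)\,I(r_1\|\boldsymbol{z}^1+\lambda r_2\omega^2\|)\,d\omega^2$, where $I(x) := \int e^{x\omega_1}d\omega$ is the moment generating function of a single coordinate on $S^{N-1}$. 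Since $I$ is log-convex and even it is non-decreasing on $[0,\infty)$, so $\mathcal{R}$ is non-decreasing in $r_1$, and by symmetry also in $r_2$. Hence $\mathcal{R}(R_1,R_2) \geq \mathcal{R}(\sqrt{N},\sqrt{N})$ on $\{R_1,R_2 \geq \sqrt{N}\}$, whose probability equals $e^{-2N\tau_N^b}$, and the inequality follows.

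Combining the two steps gives the pointwise bound $F_{k+1}(0) \leq C + B_{k+1}(\boldsymbol{z}^1,\boldsymbol{z}^2,\lambda)$ with $C := -N\lambda u + N|\lambda|\alpha + 2N\tau_N^b$. The final step is a descending induction on $p$ from $k+1$ down to $1$: the operators $G \mapsto n_p^{-1}\log\e_p\exp(n_p G)$ (or $\e_p G$ when $n_p=0$) are monotone in $G$, and the splitting $\boldsymbol{z}^j = (\boldsymbol{h}+\sum_{q<p}\boldsymbol{y}_q^j) + \sum_{q \geq p}\boldsymbol{y}_q^j$ makes the $B$-recursion absorb $\boldsymbol{y}_p^j,\ldots,\boldsymbol{y}_k^j$ exactly as the $F$-recursion does. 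Thus the additive constant $C$ is preserved at every level, yielding $F_p(0) \leq C + B_p(\boldsymbol{h}+\sum_{q<p}\boldsymbol{y}_q^1,\boldsymbol{h}+\sum_{q<p}\boldsymbol{y}_q^2,\lambda)$; at $p=1$, applying $\e_0$ (trivial since $n_0=0$), taking the outer expectation in $X_N^j$, and dividing by $N$ converts $C/N$ to $-\lambda u + |\lambda|\alpha + 2\tau_N^b$ and produces \eqref{lem1:eq1}. I expect the bulk of the effort to lie in the sphere-to-Gaussian comparison: the monotonicity of $\mathcal{R}$ crucially uses the exponential-of-a-quadratic form of $G$ and the log-convexity of the coordinate MGF $I$.
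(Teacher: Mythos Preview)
Your proof is correct and follows precisely the approach the paper invokes (Lemma 7.1 of Talagrand \cite{talagrand2006free}): Lagrange-relax the overlap constraint, bound the sphere integral by the Gaussian one via the radial monotonicity of the integrand on $\{R_1,R_2\geq\sqrt N\}$, and then push the resulting pointwise inequality through the monotone cascade. Your handling of the cross term $\lambda\boldsymbol\sigma^1\cdot\boldsymbol\sigma^2$ via the monotonicity of $I(x)=\int e^{x\omega_1}d\omega$ is exactly the two-replica adaptation that Talagrand's one-system argument requires.
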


To compute the term $N^{-1}\e B_1(\boldsymbol{h}+\boldsymbol{y}_0^1,\boldsymbol{h}+\boldsymbol{y}_0^2,\lambda)$, we will need a technical lemma.

\begin{Lemma}\label{prop05}
For $x^1$, $x^2 \in \mathbb{R}$, we define
\begin{align}
\begin{split}\label{prop05:eq1}
 J^1_{k+1}(x^1,x^2,\lambda)&= \log \int \exp \rho^1 \left( \frac{x^1+x^2}{\sqrt{2}} \right)  d \nu_1^{b-\lambda} (\rho^1) = \frac{(\frac{x^1+x^2}{\sqrt{2}})^2}{2(b-\lambda)}, \\
 J^2_{k+1}(x^1,x^2,\lambda)&=\log \int \exp \rho^2 \left( \frac{x^1-x^2}{\sqrt{2}} \right)  d \nu_1^{b+\lambda} (\rho^2) =  \frac{(\frac{x^1-x^2}{\sqrt{2}})^2}{2(b+\lambda)},
 \end{split}
\end{align}
and recursively for $1\leq p\leq k$ and $j=1,2$,
\begin{align*}
&J^j_{p}(x^1,x^2,\lambda)= \left\{
\begin{array}{ll}
\frac{1}{n_{p}} \log \mathbb{E} \exp n_{p}J^j_{p+1}(x^1+y_{p}^1,x^2+y_{p}^2,\lambda),   & \textrm{if} \ n_{p} \neq 0, \\
\\
\mathbb{E} J^j_{p+1}(x^1+y_{p}^1,x^2+y_{p}^2,\lambda),     & \textrm{if} \ n_{p} = 0,
\end{array} \right.
\end{align*}
then 
\begin{align} 
\begin{split}\label{Bequa}
&\frac{1}{N}\mathbb{E}B_1(\boldsymbol{h}+\boldsymbol{y}_0^1,\boldsymbol{h}+\boldsymbol{y}_0^2,\lambda)\\
& = \log \sqrt{\frac{b^2}{b^2-\lambda^2}}+\mathbb{E} J_{1}^1(h+y_0^1,h+y_0^2,\lambda)+ \mathbb{E} J_{1}^2(h+y_0^1,h+y_0^2,\lambda).
\end{split}
\end{align}
\end{Lemma}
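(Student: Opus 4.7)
The proof I would write is a decoupling argument via an orthogonal change of variables in the spin space, followed by a downward induction on the recursion level $p$.

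The first step is to evaluate the base case $B_{k+1}$ by the linear change of variables $\boldsymbol{\rho}^1=(\boldsymbol{\sigma}^1+\boldsymbol{\sigma}^2)/\sqrt{2}$ and $\boldsymbol{\rho}^2=(\boldsymbol{\sigma}^1-\boldsymbol{\sigma}^2)/\sqrt{2}$. This is orthogonal, so the Jacobian is one, and it has the three identities $\boldsymbol{\sigma}^1\cdot\boldsymbol{\sigma}^2=(\|\boldsymbol{\rho}^1\|^2-\|\boldsymbol{\rho}^2\|^2)/2$, $\|\boldsymbol{\sigma}^1\|^2+\|\boldsymbol{\sigma}^2\|^2=\|\boldsymbol{\rho}^1\|^2+\|\boldsymbol{\rho}^2\|^2$, and $\boldsymbol{x}^1\cdot\boldsymbol{\sigma}^1+\boldsymbol{x}^2\cdot\boldsymbol{\sigma}^2=\tfrac{\boldsymbol{x}^1+\boldsymbol{x}^2}{\sqrt{2}}\cdot\boldsymbol{\rho}^1+\tfrac{\boldsymbol{x}^1-\boldsymbol{x}^2}{\sqrt{2}}\cdot\boldsymbol{\rho}^2$. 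Plugging these in, the combined Gaussian weight in the integrand becomes $\exp(-\tfrac{b-\lambda}{2}\|\boldsymbol{\rho}^1\|^2-\tfrac{b+\lambda}{2}\|\boldsymbol{\rho}^2\|^2)$. After pulling out the normalizing factor $(b/2\pi)^N$ and rewriting the two halves as integrals against $d\nu_N^{b-\lambda}$ and $d\nu_N^{b+\lambda}$, we read off $B_{k+1}(\boldsymbol{x}^1,\boldsymbol{x}^2,\lambda)=N\log\sqrt{b^2/(b^2-\lambda^2)}+\sum_{i=1}^N J^1_{k+1}(x^1_i,x^2_i,\lambda)+\sum_{i=1}^N J^2_{k+1}(x^1_i,x^2_i,\lambda)$, using the explicit one-dimensional Gaussian integrals in \eqref{prop05:eq1}.

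Next I would propagate this product structure through the recursion by a downward induction on $p$. The crucial observation is that $J^1_p(x^1,x^2,\lambda)$ depends on $(x^1,x^2)$ only through $x^1+x^2$, and $J^2_p$ only through $x^1-x^2$; this is obvious at $p=k+1$ and is preserved by the recursion. Consequently $J^1_{p+1}(x^1_i+y^1_{i,p},x^2_i+y^2_{i,p},\lambda)$ depends on the disorder only through $u_{i,p}:=(y^1_{i,p}+y^2_{i,p})/\sqrt{2}$, while $J^2_{p+1}$ depends only on $v_{i,p}:=(y^1_{i,p}-y^2_{i,p})/\sqrt{2}$. The covariance prescription \eqref{eq3} makes $u_{i,p}$ and $v_{i,p}$ uncorrelated and hence, by joint Gaussianity, independent (across $i$ as well), regardless of whether $p<\tau$ or $p\geq\tau$ and of the sign $\eta$. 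Therefore the expectation $\mathbb{E}\exp n_p[\sum_i J^1_{p+1}+\sum_i J^2_{p+1}]$ factors into the product of the two expectations, and taking $(1/n_p)\log$ turns this into a sum; the constant $N\log\sqrt{b^2/(b^2-\lambda^2)}$ comes through unchanged. The case $n_p=0$ is handled identically since $\mathbb{E}$ splits over independent arguments. This establishes $B_p(\boldsymbol{x}^1,\boldsymbol{x}^2,\lambda)=N\log\sqrt{b^2/(b^2-\lambda^2)}+\sum_i J^1_p(x^1_i,x^2_i,\lambda)+\sum_i J^2_p(x^1_i,x^2_i,\lambda)$ for every $p$.

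Finally I would apply the formula at $p=1$ with $\boldsymbol{x}^j=\boldsymbol{h}+\boldsymbol{y}_0^j$, divide by $N$, and take $\mathbb{E}$; since the coordinates $(y_{i,0}^1,y_{i,0}^2)$ are i.i.d.\ across $i$, each of the two sums contributes $N\,\mathbb{E} J^j_1(h+y_0^1,h+y_0^2,\lambda)$, which yields \eqref{Bequa}. The main technical point to verify carefully is the independence of $u_{i,p}$ and $v_{i,p}$ under \eqref{eq3} together with the claim that $J^1_p$ and $J^2_p$ depend on their first two arguments only through their sum and difference respectively; once these two facts are in place the induction is just bookkeeping. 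Everything else amounts to routine Gaussian integration and the orthogonality of the change of variables, exactly as in the single-replica argument of Lemma 7.1 of \cite{talagrand2006free}.
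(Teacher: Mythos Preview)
Your proof is correct and follows essentially the same approach as the paper: the orthogonal change of variables $(\sigma^1,\sigma^2)\mapsto((\sigma^1+\sigma^2)/\sqrt{2},(\sigma^1-\sigma^2)/\sqrt{2})$ to decouple the base case, and the independence of $y_p^1+y_p^2$ and $y_p^1-y_p^2$ to propagate the decoupling through the recursion. The only cosmetic difference is that the paper first reduces $B_p$ to a sum $\sum_{i\leq N}\Gamma_p(x_i^1,x_i^2,\lambda)$ of one-dimensional functionals and then performs the rotation in one dimension, whereas you carry out the rotation directly in $\mathbb{R}^N$ and let the coordinate factorization fall out afterwards.
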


\begin{proof} For $x^1$, $x^2 \in \mathbb{R}$ and $1 \leq p\leq k+1$, we define the following functions
\begin{align*}
\Gamma_{k+1}(x^1,x^2,\lambda)& = \log \int \exp \left(x^1 \sigma^1+ x^2 \sigma^2 +\lambda \sigma^1 \sigma^2  \right)
 d \nu^b (\sigma^1) d \nu^b(\sigma^2), \\
 \Gamma_{p}(x^1,x^2,\lambda)& =\frac{1}{n_{p}} \log \mathbb{E} \exp n_{p} \Gamma_{p+1}(x^1+y_{p}^1,x^2+y_{p}^2,\lambda),  \ 1 \leq p\leq k.
\end{align*}
Since $(\si_1^1,\si_1^2),\ldots,(\si_N^1,\si_N^2)$ are independent under the measure $\nu_N^b\times \nu_N^b$, we see recursively that
$B_{p}(\boldsymbol{x}^1, \boldsymbol{x}^2,\lambda) = \sum_{i \leq N} \Gamma_{p}(x^1_i,x^2_i,\lambda),$ where $\boldsymbol{x}^j=(x_i^j)_{i\leq N}$ for $j=1,2$. Consequently,
\begin{align*}
&\frac{1}{N} \mathbb{E}B_1(\boldsymbol{h}+\boldsymbol{y}_0^1,\boldsymbol{h}+\boldsymbol{y}_0^2,\lambda) = \mathbb{E} \Gamma_1(h+y_0^1,h+y_0^2,\lambda).
\end{align*}
Now making change of variables 
\begin{align*}
\sigma^1 = \frac{\rho^1+\rho^2}{\sqrt{2}}, \ \sigma^2 = \frac{\rho^1-\rho^2}{\sqrt{2}}
\end{align*}
and noting that $\rho^1,\rho^2$ are i.i.d. Gaussian with mean zero and variance $b^{-1}$, we obtain
\begin{align*}
&\Gamma_{k+1}(x^1,x^2,\lambda) \\
&= \log \int \exp \left(\frac{x^1+x^2}{\sqrt{2}} \rho^1 + \frac{x^1-x^2}{\sqrt{2}} \rho^2 + \lambda \frac{(\rho^1)^2}{2} -  \lambda \frac{(\rho^2)^2}{2} \right) d \nu^b(\rho^1)d \nu^b(\rho^2) \\
&= \log \int \exp \left(\frac{x^1+x^2}{\sqrt{2}} \rho^1 + \lambda \frac{(\rho^1)^2}{2} \right) d \nu^b(\rho^1) +\log \int \exp \left(  \frac{x^1-x^2}{\sqrt{2}} \rho^2  -  \lambda \frac{(\rho^2)^2}{2} \right) d  \nu^b(\rho^2) \\
&= \log \sqrt{\frac{b^2}{b^2-\lambda^2}} + J^1_{k+1}(x^1,x^2,\lambda) + J^2_{k+1}(x^1,x^2,\lambda).
\end{align*}
Since $y^1_{p}+y^2_{p}$ and $y^1_{p}-y^2_{p}$ are independent, starting with \eqref{prop05:eq1}, an iterative argument implies that $J^1_{p+1}(x^1+y_{p}^1,x^2+y_p^2)$ and $J^2_{p+1}(x^1+y_{p}^1,x^2+y_p^2)$ are independent of each other, which yields
\begin{align*}
 \Gamma_{p}(x^1,x^2,\lambda)=\log \sqrt{\frac{b^2}{b^2-\lambda^2}} + J_{p}^1(x^1,x^2,\lambda)+J_{p}^2(x^1,x^2,\lambda)
\end{align*}
for $1\leq p\leq k+1$ and hence \eqref{Bequa}.
\end{proof}

\begin{proof}[Proof of Proposition \ref{prop04}] The proof is essentially based on an explicit calculation of the right hand-side of \eqref{Bequa}. To lighten notations, we set
\begin{align*}
v_p&=\xi'(q_{p+1})-\xi'(q_p)\,\,\mbox{if $0\leq p\leq k$,}\\
d_p'&=\sum_{p\leq \ell \leq \tau-1}n_\ell v_\ell=\frac{1}{1+t}\int_{q_p}^{q_\tau}\xi''(s)x(s)ds\,\,\mbox{if $0\leq p\leq \tau-1$},\,\,d_{\tau}'=0,\\
d_p&= \sum_{p\leq \ell \leq k}n_\ell v_\ell=\int_{q_p}^1\xi''(s)x(s)ds\,\,\mbox{if $\tau\leq p\leq k$},\,\,d_{k+1}=0.
\end{align*}
Recall \eqref{eq3}. It is straightforward to obtain that for $\tau\leq p\leq k$
\begin{align*}
&\mathbb{E} \left( \frac{y_p^1+y_p^2}{\sqrt{2}} \right)^2 =\mathbb{E} \left( \frac{y_p^1-y_p^2}{\sqrt{2}} \right)^2 = v_p
\end{align*}
 and for $0\leq p<\tau,$
\begin{align*}
&\mathbb{E} \left( \frac{y_p^1+y_p^2}{\sqrt{2}} \right)^2 = (1+\eta t)v_p, \,\,\mathbb{E} \left( \frac{y_p^1-y_p^2}{\sqrt{2}} \right)^2 = (1-\eta t)v_p.
\end{align*}
Combining these with the formula that for a standard Gaussian random variable $z$, $nv<L$ and $y\in\mathbb{R}$, 
\begin{align*}
&\frac{1}{n}\log\e\exp \frac{n}{2L}(y+\sqrt{v}z)^2=\left\{\begin{array}{ll}
\frac{y^2}{2(L-nv)}+\frac{1}{2n}\log\frac{L}{L-nv},&\mbox{if $n>0$},\\
\frac{y^2}{2L}+\frac{v}{2L},&\mbox{if $n=0$,}
\end{array}
\right.
\end{align*}
an iterative procedure leads to
\begin{align} 
\begin{split}\label{J1g}
\mathbb{E} J_1^1(h+y_0^1,h+y_0^2,\lambda)
&=\frac{2h^2}{2(b-\lambda-(d_{\tau}+(1+\eta t)d_0'))}\\
&+\frac{1}{2}\sum_{p=0}^{\tau-1}\frac{1}{n_p}\log\frac{b-\lambda-(d_{\tau}+(1+\eta t)d_{p+1}')}{b-\lambda-(d_{\tau}+(1+\eta t)d_p')}\\
&+\frac{1}{2}\sum_{p=\tau}^{k}\frac{1}{n_p}\log\frac{b-\lambda-d_{p+1}}{b-\lambda-d_p}
  \end{split}
\end{align}
and
\begin{align}
\begin{split} \label{J2g}
\mathbb{E} J_1^2(h+y_0^1,h+y_0^2,\lambda)
&=\frac{1}{2}\sum_{p=0}^{\tau-1}\frac{1}{n_p}\log\frac{b+\lambda-(d_{\tau}+(1-\eta t)d_{p+1}')}{b+\lambda-(d_{\tau}+(1-\eta t)d_p')}\\
&+\frac{1}{2}\sum_{p=\tau}^{k}\frac{1}{n_p}\log\frac{b+\lambda-d_{p+1}}{b+\lambda-d_p}.
\end{split}
\end{align}
Now recall from the statement of Proposition \ref{prop04}, $d(q)=\int_q^1\xi''(s)x(s)ds$ and $$\phi_u(q)=d(q_{\tau})+\frac{1-t}{1+t}(d(q)-d(q_{\tau})).$$ 
Since $d_{p}'=d(q_p)-d(q_{\tau})$ and
\begin{align*}
d_\tau+(1\pm\eta t)d_p'&=d(q_\tau)+\frac{1\pm \eta t}{1+t}(d(q_p)-d(q_\tau))=\left\{
\begin{array}{ll}
d(q_p),&\mbox{if $u\geq 0$},\\
\phi_u(q_p),&\mbox{if $u<0$,}
\end{array}
\right.
\end{align*}
we have
\begin{align}\label{eq4}
\frac{2h^2}{2(b\mp\lambda-(d_{\tau}+(1\pm\eta t)d_0'))}&=\left\{
\begin{array}{ll}
\frac{2h^2}{2(b\mp\lambda-d(0))},&\mbox{if $u\geq 0$},\\
\frac{2h^2}{2(b\mp\lambda-\phi_u(0))},&\mbox{if $u<0$.}
\end{array}
\right.
\end{align}
Using the fundamental theorem of calculus and the fact that $x(s)=m_p$ for $s\in [q_p,q_{p+1})$,
\begin{align}\label{eq5}
&\sum_{p=0}^{\tau-1}\frac{1}{n_p}\log\frac{b\mp\lambda-(d_\tau+(1\pm\eta t)d_{p+1}')}{b\mp\lambda-(d_\tau+(1\pm\eta t)d_p')}\notag\\
&=\sum_{p=0}^{\tau-1}\frac{1}{n_p}\left(\log(b\mp\lambda-(d_\tau+(1\pm\eta t)d_{p+1}'))-\log(b\mp\lambda-(d_\tau+(1\pm\eta t)d_p'))\right)\notag\\
&=\sum_{p=0}^{\tau-1}\frac{(1+t)}{m_p}\frac{(1\pm \eta t)}{1+t}\int_{q_p}^{q_{p+1}}\frac{\xi''(s)x(s)}{b\mp\lambda-(d_\tau+(1\pm\eta t)d_p')}ds\notag\\
&=\left\{
\begin{array}{ll}
(1\pm t)\int_{0}^{q_\tau}\frac{\xi''(s)}{b\mp\lambda-d(s)}ds,&\mbox{if $u\geq 0$},\\
(1\mp t)\int_{0}^{q_\tau}\frac{\xi''(s)}{b\mp\lambda-\phi_u(s)}ds,&\mbox{if $u<0$},
\end{array}
\right.
\end{align}
and
\begin{align}\label{eq6}
\sum_{p=\tau}^{k}\frac{1}{n_p}\log\frac{b\mp\lambda-d_{p+1}}{b\mp\lambda-d_{p}}&=\sum_{p=\tau}^{k}\frac{1}{n_p}\bigl(\log (b\mp\lambda-d(q_{p+1}))-\log(b\mp\lambda -d(q_{p}))\bigr)\notag\\
&=\sum_{p=\tau}^{k}\frac{1}{m_{p}}\int_{q_{p}}^{q_{p+1}}\frac{\xi''(s)x(s)}{b\mp\lambda-d(s)}ds\notag\\
&=\int_{q_{\tau}}^1\frac{\xi''(s)}{b\mp\lambda-d(s)}ds.
\end{align}
Plugging \eqref{eq4}, \eqref{eq5} and \eqref{eq6} into \eqref{J1g} and \eqref{J2g}, the equation \eqref{prop05}, Lemmas \ref{lem1}, \ref{prop05} and Proposition \ref{RSB} together complete our proof by taking $N\rightarrow\infty$ and $\alpha\downarrow 0$ in \eqref{lem1:eq1} and noting the usual large deviation principle $\lim_{N\rightarrow\infty}\tau_N^b=2^{-1}(b-1-\log b).$ 

\end{proof}

\section{Proofs of main results}

Now we are ready to prove our main results. Throughout this section, $({x},{b})$ stands for the optimizer in \eqref{pf}. Denote $ {d}(q) = \int_q^1\xi''(s)  {x}(s)ds$ and $$ {\phi}_u(q)= {d}(u)+\frac{1-t}{1+t}( {d}(q)- {d}(u)).
$$ 
First of all, we start with a proposition that is used to determine the value $u^*$ stated in Theorems \ref{thm0} and \ref{thm01}. 
Let $u_{ {x}}$ be the smallest value of the support of $ {x}.$ A crucial fact about $u_{ {x}}$ is that it must satisfy the following equation,
\begin{align}\label{eq8}
\frac{h^2+\xi'(u_{ {x}})}{( {b}- {d}(0))^2}=u_{ {x}}.
\end{align}
This can be seen from the proof of Theorem 7.2 in \cite{talagrand2006free}. In particular, \eqref{eq8} implies $u_{ {x}}>0$ if $h\neq 0.$

\begin{Proposition}\label{prop01}
For $t\in(0,1)$, define the function
\begin{align} \label{fixedpoint}
f(u)=\frac{h^2+t\xi'(u)}{( {b}- {d}(0))^2}-u
\end{align}
for $u\in [-u_{ {x}},u_{ {x}}].$ Then $f(u)=0$ has a unique solution $u^*$. Moreover, $u^*=0$ when $h =0$ and $u^* \in (0,u_{ {x}})$ when $h\neq 0$.
\end{Proposition}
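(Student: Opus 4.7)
The plan is to analyze $f$ on $[-u_x,u_x]$ by combining the defining identity \eqref{eq8} with (local) convexity/concavity and the intermediate value theorem. Set $c:=(b-d(0))^2>0$ so that $f(u)=(h^2+t\xi'(u))/c - u$, and rewrite \eqref{eq8} as $h^2+\xi'(u_x)=cu_x$. Since $\xi$ is even, $\xi'$ and $\xi'''$ are odd; combining this with $\xi'''\geq 0$ on $[0,\infty)$ gives $f''(u)=t\xi'''(u)/c\geq 0$ on $[0,u_x]$ and $f''(u)\leq 0$ on $[-u_x,0]$, i.e.\ $f$ is convex on the right half of the interval and concave on the left half.

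Consider first the case $h\neq 0$; then \eqref{eq8} forces $u_x>0$. Direct computation gives $f(0)=h^2/c>0$, and using \eqref{eq8} to eliminate $h^2$,
\begin{align*}
f(u_x) \;=\; \frac{(t-1)\xi'(u_x)}{c} \;<\; 0,
\end{align*}
because $\xi'(u_x)>0$ (as $\xi'$ is strictly increasing on $[0,\infty)$ with $\xi'(0)=0$) and $t<1$. Similarly, using the oddness of $\xi'$,
\begin{align*}
f(-u_x) \;=\; (1-t)u_x+\frac{(1+t)h^2}{c} \;>\; 0.
\end{align*}
The intermediate value theorem applied on $[0,u_x]$ produces a zero in $(0,u_x)$. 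Uniqueness on $[0,u_x]$ comes from convexity: if there were two zeros $u_1<u_2$ in $[0,u_x]$, convexity would force $f\leq 0$ on $[u_1,u_2]$ and $f\geq 0$ on $[u_2,u_x]$, contradicting $f(u_x)<0$. On $[-u_x,0]$, concavity places $f$ above the chord joining $(-u_x,f(-u_x))$ and $(0,f(0))$, and that chord is a convex combination of two strictly positive numbers, hence strictly positive; so $f>0$ on $[-u_x,0]$ and contributes no zero. Therefore $u^*\in(0,u_x)$ is the unique solution.

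If instead $h=0$, then $f$ is odd. The case $u_x=0$ is trivial, so assume $u_x>0$. One has $f(0)=0$ and, by \eqref{eq8}, $f(u_x)=(t-1)u_x<0$. Convexity of $f$ on $[0,u_x]$ together with $f(0)=0$ gives $f(u)\leq (u/u_x)\,f(u_x)<0$ for $u\in(0,u_x]$, and oddness transfers this to $f>0$ on $[-u_x,0)$, so $u^*=0$ is the unique zero.

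The only delicate step is the sign computation for $f(\pm u_x)$, which is where the identity \eqref{eq8} is essential; once these are in hand, existence is a one-line application of the intermediate value theorem and uniqueness drops out of the convex/concave structure of $f$ on the two halves of the interval.
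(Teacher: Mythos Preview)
Your proof is correct and follows essentially the same approach as the paper: both establish convexity/concavity of $f$ on the two halves of $[-u_x,u_x]$ via the oddness of $\xi'''$, use \eqref{eq8} to pin down the signs of $f$ at $0$ and $\pm u_x$, and then combine the intermediate value theorem with convexity/concavity to obtain existence and uniqueness. The only cosmetic differences are that you compute $f(-u_x)$ exactly rather than bounding it, and you invoke the oddness of $f$ explicitly in the $h=0$ case.
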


\begin{proof} Note that $\xi'''$ is an odd function. This implies that $f$ is convex on $[0,u_{ {x}}]$ and is concave on $[-u_{ {x}},0]$. Assume that $h\neq 0.$ In this case, since $f(0)>0$ and $f(u_{ {x}})<0$ by \eqref{eq8}, the intermediate value theorem and the convexity of $f$ on $[0,u_{ {x}}]$ conclude that $f(u)=0$ has a unique solution $u^*$ on $[0,u_{ {x}}]$ and it satisfies $u^*\in(0,u_{ {x}}).$ In addition, since from \eqref{eq8},
$$
f(-u_{ {x}})=-\frac{-h^2+t\xi'(u_{ {x}})}{( {b}- {d}(0))^2}+u_{ {x}}>-\frac{h^2+\xi'(u_{ {x}})}{( {b}- {d}(0))^2}+u_{ {x}}=0,
$$
the concavity of $f$ on $[-u_{ {x}},0]$ and $f(0)>0$ imply that $f(u)=0$ has no solution on $[-u_{ {x}},0].$ This finishes the proof for the case $h\neq 0.$ The situation for $h=0$ is essentially identical. If $u_{ {x}}=0$, obviously $u^*=0.$ If $u_{ {x}}\neq 0,$ we still have $f(-u_{ {x}})>0>f(u_{ {x}})$, but now $f(0)=0.$ The convexity and concavity of $f$ on $[0,u_{ {x}}]$ and $[-u_{ {x}},0]$ respectively conclude that $0$ is the unique solution to $f(u)=0$ on $[-u_{ {x}},u_{ {x}}].$
\end{proof}

\begin{proof}[Proof of Theorem \ref{thm0}] Note that 
\begin{align}
\begin{split}\label{eq10}
\mathcal{P}_u( {x}, {b},0)&=\left\{
\begin{array}{ll}
T_u(x,b,0)+\frac{h^2}{ {b}- {d}(0)},&\mbox{if $u\in[0,1]$},\\
T_u(x,b,0)+\frac{h^2}{ {b}- {\phi}_{|u|}(0)},&\mbox{if $u\in[-1,0)$},
\end{array}\right.
\end{split}
\end{align}
where from \eqref{eq12},
\begin{align}
\begin{split}\label{eq9}
T_u(x,b,0)&:=\frac{1+t}{2}  \int_0^{|u|} \frac{\xi''(s)}{ {b}- {d}(s)} ds+ \frac{1-t}{2}  \int_0^{|u|} \frac{\xi''(s)}{ {b}- {\phi}_{|u|}(s)} ds\\
&\quad+\int_{|u|}^1 \frac{\xi''(s)}{ {b}- {d}(s)} ds+ {b}-1- \log  {b} - \int_0^1 \xi''(q)  {x}(q)dq.
\end{split}
\end{align}
 Consider first that $|u|>u_{ {x}}.$ Since $ {x}(q)>0$ for $q\in(u_{ {x}},|u|)$, we have that for all $s\in[0,|u|),$ 
\begin{align}\label{dsphi}
 {d}(s)- {\phi}_{|u|}(s) = \frac{2t}{1+t}( {d}(s)- {d}(|u|))=\int_s^{|u|} \xi''(q) {x}(q) dq >0
\end{align}
and from \eqref{eq10}, 
$$
\mathcal{P}_u( {x}, {b},0)\leq T_u(x,b,0)+\frac{h^2}{ {b}- {d}(0)}
$$
for any $u\in[-1,1]$. In addition, from \eqref{dsphi}, the first line of \eqref{eq9} is strictly bounded above by
\begin{align*}
\frac{1+t}{2} \int_0^{|u|} \frac{\xi''(s)}{ {b}- {d}(s)} ds+ \frac{1-t}{2}  \int_0^{|u|} \frac{\xi''(s)}{ {b}- {d}(s)} ds=\int_{0}^{|u|}\frac{\xi''(s)}{ {b}- {d}(s)} ds
\end{align*}
and as a result, these inequalities together with the equation \eqref{parisifunc} lead to $\mathcal{P}_u( {x}, {b},0)<2\mathcal{P}( {x}, {b})$. This completes the proof for \eqref{thm0:eq1} with $|u|>u_{ {x}}$ by using Proposition \ref{prop04}. As for the case $|u|\leq u_{ {x}},$ since $x(q)=0$ for $q\in[0,|u|),$ we have that for all $s\in[0,|u|],$
\begin{align*}
d(s)&=\int_{u_{ {x}}}^1\xi''(q) {x}(q)dq= {\phi}_{|u|}(s).
\end{align*} 
This allows us to write 
\begin{align*}
\mathcal{P}_u( {x}, {b},\lambda)&=\log \sqrt{\frac{b^2}{b^2-\lambda^2}}+\frac{h^2}{b-\lambda- d(0)}\\
&\quad+  \frac{1+t}{2}  \int_0^{|u|} \frac{\xi''(s)}{b-\eta\lambda-d(s)} ds+ \frac{1-t}{2}  \int_0^{|u|} \frac{\xi''(s)}{b+\eta\lambda-d(s)} ds\\
&\quad +\frac{1}{2} \int_{|u|}^1 \frac{\xi''(s)}{b-\lambda-d(s)} ds + \frac{1}{2} \int_{|u|}^1 \frac{\xi''(s)}{b+\lambda-d(s)} ds \\
 &\quad-\lambda u +b-1- \log b - \int_0^1 \xi''(q) x(q)dq
\end{align*}
for all $u\in [-u_{ {x}},u_{ {x}}].$ A direct computation gives that
\begin{align*}
\mathcal{P}_u( {x}, {b},0)&=2\mathcal{P}( {x}, {b}),\,\,\partial_\lambda\mathcal{P}_u( {x}, {b},0)=f(u)
\end{align*}
and moreover, for $\lambda$ in a small open neighborhood of $0$, 
\begin{align*}
|\partial_{\lambda\lambda}\mathcal{P}_u( {x}, {b},\lambda)|&\leq L,
\end{align*}
where $L$ is a positive constant independent of $\lambda.$
Consequently, applying the Taylor theorem and taking $\lambda =-\delta f(u)/L$ for sufficiently small $\delta>0$, if $u\in[-u_{ {x}},u_{ {x}}]$ and $u\neq u^*,$ then Proposition \ref{prop01} yields
\begin{align*}
\limsup_{\alpha\downarrow 0}\limsup_{N\rightarrow\infty}p_{N,u,\alpha}
&\leq\mathcal{P}_u( {x}, {b},0)+\partial_\lambda\mathcal{P}_u( {x}, {b},0) \lambda+\frac{L}{2}\lambda^2\\
&=2\mathcal{P}( {x}, {b})-\frac{\delta f(u)^2}{L}\Bigl(1-\frac{\delta}{2}\Bigr)\\
&<2\mathcal{P}( {x}, {b}).
\end{align*}
This proves \eqref{thm0:eq1} for $|u|\leq u_{ {x}}$ with $u\neq u^*.$
\end{proof}

At the end of this section, we prove Theorem \ref{thm01}. It will need an inequality of Gaussian concentration of measure from the appendix of \cite{panchenko2007note} stated below.

\begin{Lemma}\label{lem03}
Let $\nu$ be a finite measure on $\mathbb{R}^N$ and $g(z)$ be a centered Gaussian process indexed on $\mathbb{R}^N$ such that
$\mathbb{E} g(z)^2 \leq a$ for $z$ in the support of the measure $\nu$. If
\begin{align*}
X = \log \int_{\mathbb{R}^N} \exp g(z) d \nu (z),
\end{align*}
then for all $s>0$,
\begin{align*}
\mathbb{P} (|X-\mathbb{E}X| \geq s) \leq 2 \exp \left( -\frac{s^2}{4a} \right).
\end{align*}
\end{Lemma}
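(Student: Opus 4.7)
The standard route is to represent the Gaussian process as a countable series of i.i.d.\ standard normals, reducing the statement to Gaussian concentration for Lipschitz functions on $\mathbb{R}^n$. Using a Karhunen--Lo\`eve expansion, write $g(z)=\sum_{i\geq 1}a_i(z)\eta_i$ with $(\eta_i)_{i\geq 1}$ i.i.d.\ $N(0,1)$ and $\sum_i a_i(z)^2=\mathbb{E}g(z)^2\leq a$ for $z\in\mathrm{supp}\,\nu$. Truncate: set $g_n(z)=\sum_{i\leq n}a_i(z)\eta_i$ and $X_n=\log\int\exp g_n(z)\,d\nu(z)$, so that $X_n\to X$ almost surely and in $L^1$. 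It then suffices to establish the concentration bound for $X_n$ uniformly in $n$ and pass to the limit.

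The main computation is the Lipschitz bound for $F_n$, where $X_n=F_n(\eta_1,\ldots,\eta_n)$. Direct differentiation gives $\partial_{\eta_j}F_n=\langle a_j(z)\rangle_n$, where $\langle\,\cdot\,\rangle_n$ is the Gibbs expectation on $\mathrm{supp}\,\nu$ with density proportional to $\exp g_n(z)$. Jensen's inequality then yields
$$
\|\nabla F_n\|^2=\sum_{j\leq n}\langle a_j(z)\rangle_n^{\,2}\;\leq\;\Big\langle\sum_{j\leq n}a_j(z)^2\Big\rangle_n\;\leq\;a,
$$
so $F_n$ is $\sqrt{a}$-Lipschitz on $\mathbb{R}^n$. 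Applying the Borell--Tsirelson--Ibragimov--Sudakov concentration inequality produces $\mathbb{P}(|X_n-\mathbb{E}X_n|\geq s)\leq 2\exp(-s^2/(2a))$, which is in fact sharper than and immediately implies the claimed bound $2\exp(-s^2/(4a))$.

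The only genuine obstacle is a soft measure-theoretic one: justifying the Karhunen--Lo\`eve representation and the truncation requires a separable version of $g(z)$ together with verification that $X_n\to X$ and $\mathbb{E}X_n\to\mathbb{E}X$ (here the variance bound $\mathbb{E}g(z)^2\leq a$ provides the integrable majorant needed for dominated convergence). An alternative route that bypasses the truncation is to apply the Borell--Tsirelson inequality directly to the convex Lipschitz functional $g\mapsto\log\int e^{g(z)}d\nu(z)$ on the Cameron--Martin space of the process; the slight looseness from $2a$ to $4a$ in the statement simply absorbs any such approximation step.
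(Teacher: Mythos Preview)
The paper does not actually prove this lemma; it is quoted without proof from the appendix of \cite{panchenko2007note}. Your argument is the standard one and is correct: represent the process via a Karhunen--Lo\`eve-type expansion, compute $\|\nabla F_n\|^2=\sum_j\langle a_j\rangle_n^2\leq\bigl\langle\sum_j a_j^2\bigr\rangle_n\leq a$ via Jensen, and apply Gaussian concentration for Lipschitz functions. As you observe, this in fact yields the sharper bound $2\exp(-s^2/(2a))$; the constant $4a$ in the stated lemma is simply not tight. The measure-theoretic caveats you flag (existence of a separable version, passage to the limit in the truncation) are genuine but routine in the applications at hand, where $g$ is a continuous Gaussian process on a compact set.
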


\begin{proof}[Proof of Theorem \ref{thm01}] Let $\varepsilon>0$ and set $S_\varepsilon=[-1,1]\setminus (u^*-\varepsilon,u^*+\varepsilon).$ Using \eqref{pf} and \eqref{thm0:eq1}, for any $u\in S_\varepsilon$, there exist $\alpha_u>0$ and $N_u\in\mathbb{N}$ such that
$
p_{N,u,\alpha_u}<2p_N-\varepsilon
$
for all $N\geq N_u$. Set $I_u=(u-\alpha_u,u+\alpha_u)$. Since $\{I_u:u\in S_\varepsilon\}$ forms an open covering of $S_\varepsilon$, the compactness of $S_\varepsilon$ implies that there exist $u_1,\ldots,u_n$ such that $\cup_{i=1}^nI_{u_i}$ covers $S_{\varepsilon}.$ Letting $N_0=\max\{N_{u_i}:1\leq i\leq n\}$, we obtain that 
\begin{align}
\label{eq11}
p_{N,u_i,\alpha_{u_i}}<2p_N-\varepsilon
\end{align}
for all $1\leq i\leq n$ and $N\geq N_0.$ Next, applying Lemma \ref{lem03}, the event $A_N$ such that 
\begin{align*}
\min\left\{\frac{1}{N}\log Z_N^1,\frac{1}{N}\log Z_N^2\right\}\geq p_N-\frac{\varepsilon}{8}
\end{align*}
and
\begin{align*}
\max\left\{\frac{1}{N}\log Z_{N,u_i,\alpha_{u_i}}:i=1,\ldots,n\right\}\leq p_{N,u,\alpha}+\frac{\varepsilon}{8}
\end{align*}
has probability at least $1-K\exp(-N/K)$, where $K>0$ is independent of $N.$ On $A_N$, these inequalities and \eqref{eq11} lead to
\begin{align*}
\la 1_{\left\{R_{1,2}\in S_\varepsilon\right\}}\ra&\leq \sum_{i=1}^n\bigl\la 1_{\{R_{1,2}\in I_{u_i}\}}\bigr\ra\\
&\leq \sum_{i=1}^n\exp\bigl(\log Z_{N,u_i,\alpha_{u_i}}-\log Z_{N}^1-\log Z_N^2\bigr)\\
&\leq \sum_{i=1}^n\exp\Bigl(-N\Bigl(p_{N,u_i,\alpha_{u_i}}-2p_N+\frac{\varepsilon}{4}\Bigr)\Bigr)\\
&\leq n\exp\Bigl(-\frac{3\varepsilon N}{4}\Bigr).
\end{align*}
Therefore,
\begin{align*}
\e\la 1_{\left\{R_{1,2}\in S_\varepsilon\right\}}\ra&\leq n\exp\Bigl(-\frac{3\varepsilon N}{4}\Bigr)\p(A_N)+\p(A_N^c)\\
&\leq n\exp\Bigl(-\frac{3\varepsilon N}{4}\Bigr)+K\exp\Bigl(-\frac{N}{K}\Bigr),
\end{align*}
which completes our proof.
\end{proof}

\thebibliography{99}

\bibitem{chatterjee2009disorder}
Chatterjee, S. (2009) Disorder chaos and multiple valleys in spin glasses. Preprint available at arXiv:0907.3381.

\bibitem{chen2013disorder}
Chen, W.-K. (2013) Disorder chaos in the {S}herrington-{K}irkpatrick model with external field. {\it Ann. Probab.}, {\bf 41}, no. 5, 3345--3391.

\bibitem{Chen12}
Chen, W.-K. (2013) The {A}izenman-{S}ims-{S}tarr scheme and {P}arisi formula for mixed $p$-spin spherical models. {\it Electron. J. Probab.}, {\bf 18},
no. 94, 1--14.

\bibitem{chen2014chaos}
Chen, W.-K. (2014) Chaos in the mixed even-spin models. {\it Comm. Math. Phys.}, {\bf 328}, no. 3, 867--901.

\bibitem{chen2013approach}
Chen, W.-K. and Panchenko, D. (2013) An approach to chaos in some mixed $p$-spin models. {\it Probab. Theory Rel. Fields}, {\bf 157}, no. 1-2, 389--404.

\bibitem{crisanti1993sphericalp}
Crisanti, H. and Sommers, H. J. (1993) The spherical $p$-spin interaction spin-glass model. {\it Z. Phys. B. condensed Matter}, {\bf 92}, no. 2, 257--271.

\bibitem{panchenko2007note}
Panchenko, D. (2007) A note on Talagrand's positivity principle. {\it Electron. Comm. Probab.}, {\bf 12}, 401--410.
  
\bibitem{panchenko2007overlap}
Panchenko, D. and Talagrand, M. (2007) On the overlap in the multiple spherical SK models. {\it Ann. Probab.}, {\bf 35}, no. 6, 2321--2355.

\bibitem{rizzo07}
Rizzo, T. (2009) Spin Glasses: Statics and Dynamics: Summer School, Paris 2007. {\it Progr. Probab.}, {\bf 62}, {143--157}.

\bibitem{talagrand2006free}
Talagrand, M. (2006) Free energy of the spherical mean field model. {\it Probab. Theory Rel. Fields}, {\bf 134}, no. 3, 339--382.

\bibitem{talagrand2011meanII}
Talagrand, M. (2011) Mean field models for spin glasses. Ergebnisse der Mathematik und ihrer Grenzgebiete. 3. Folge. A Series of Modern Surveys in Mathematics, {\bf 55}, Springer, Berlin.

\end{document}